\documentclass{amsart}

\usepackage{amsmath,amsfonts,amssymb}
\newtheorem{teo}{Theorem}
\newtheorem{rk}{Remark}

\newtheorem{lemma}{Lemma}

\newcommand{\R}{{\mathbb{R}}}
\newcommand{\C}{{\mathbb{C}}}
\newcommand{\Z}{{\mathbb{Z}}}
\newcommand{\N}{{\mathbb{N}}}

\newcommand{\ta}{TA}

\newcommand{\homeo}{Homeo}
\begin{document}
\title{Linearization of topologically Anosov homeomorphisms of non compact  surfaces.}
\author{Gonzalo Cousillas, Jorge Groisman and Juliana Xavier}

 \begin{abstract}  We study the dynamics of {\it Topologically Anosov} homeomorphisms of non compact surfaces.  In the case of surfaces of genus zero and finite type, 
 we classify them. We prove that if $f:S \to S$, is a Topologically 
 Anosov homeomorphism
  where
 $S$ is a non-compact surface of genus zero and finite type, then $S= \R ^ 2$ and $f$
 is conjugate to a homothety or reverse homothety (depending on wether $f$ preserves or reverses orientation). A weaker version of this result was conjectured in \cite{cgx}.

 \end{abstract}
 \maketitle

 \section{Introduction}

Let $f: S \to S$ be a homeomorphism and $\delta: S\to \R $ a continuous and strictly
  positive function.  A $\delta$-{\it pseudo-orbit} for $f$ is a sequence $(x_n)_{n\in \Z}\subset S$ such that $d(f(x_n), x_{n+1})< \delta (f(x_n))$ for all $n\in \Z$.
  If $\epsilon: S\to \R $ a continuous and strictly
  positive function, then a $\delta$-pseudo-orbit $(x_n)_{n\in \N}$ is {\it $\epsilon$-shadowed} by an orbit, if there exists $x\in S$ such that $d(x_n ,f^n (x))< \epsilon (x_n)$ for
  all $n\in \Z$.

 Throughout this paper $f:S\to S$ is a {\it Topologically Anosov} ($\ta$) homeomorphism.  That is:
 \begin{itemize}
  \item it is {topologically expansive}: there exists a continuous and strictly positive function $\epsilon: S\to \R$ such that for all $x, y \in S, x\neq y$ there exists $k\in \Z$ 
  satisfying $d(f^k (x),f^k (y)) > \epsilon (f^k (x))$;
  \item it satisfies the {\it topological shadowing property}: for all continuous and strictly positive function $\epsilon: S\to \R$ there exists $\delta: S\to \R $ a continuous and strictly
  positive function such that every $\delta$-pseudo-orbit is $\epsilon$-shadowed by an orbit. 
 \end{itemize}
 
 These definitions are generalizations of the classic notions of {\it uniform} expansivity and pseudo-orbit tracing property, suited for non-compact metric spaces. 
 On non-compact spaces it is well known that a dynamical system may be expansive or have the shadowing property with respect to one metric, but not with respect to another metric that 
 induces the same topology. Topological definitions of expansiveness and shadowing were given in \cite{dlrw} for first countable, locally compact, paracompact and Hausdorff topological
 spaces; they are equivalent to the usual metric definitions for 
 homeomorphisms on
 compact
 metric spaces, but are independent of any change of compatible metric. The definitions we gave correspond to those given in  \cite{dlrw} in the case of metric spaces, and appeared
 fist in literature in \cite{lny}.

 To illustrate what happens in the non-compact setting, note that a rigid translation in the plane is topologically expansive but does not satisfy the topological shadowing property. An example of $\ta$ homeomorphism is any homothety
 (or reverse homothety) in 
 $\R ^ 2$
 (see \cite{cou} for a proof).  As being $\ta$ is a conjugacy invariant, the whole conjugacy class of homotheties belongs to the family of $\ta$ homeomorphisms.  In this work we deal
 with the problem of classifying $\ta$ homeomorphisms.  In particular, are all $\ta$ plane  homeomorphisms conjugate to a homothety (or reverse homothety)?  Are these the only 
 examples for $\ta$ 
 homeomorphisms of non compact surfaces?\\
 
 We prove the following:
 
 \begin{teo}\label{t1} Let $f:S \to S$, be a Topologically Anosov homeomorphism,
  where
 $S$ is a non-compact surface of genus zero and finite type. Then $S= \R ^ 2$ and $f$
 is conjugate to a homothety or reverse homothety (depending on wether $f$ preserves or reverses orientation). 
  
 \end{teo}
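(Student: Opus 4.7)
A non-compact surface of genus zero and finite type is $S^{2}$ with $n\geq 1$ punctures removed: the plane ($n=1$), the cylinder ($n=2$), or an $n$-punctured sphere ($n\geq 3$). My plan is to first rule out the cases $n\geq 2$ and then classify TA homeomorphisms of $\R^{2}$. For $n\geq 2$, I would use that $f$ extends to a permutation of the $n$ ends of $S$. Topological expansivity combined with shadowing forces each end to be either uniformly attracting or uniformly repelling, because an end with non-trivial recurrence would allow shadowing of pseudo-orbits looping near the end to produce distinct orbits at bounded mutual distance, contradicting expansivity. A counting argument on the permutation of ends then forces two ends of the same attracting/repelling type, and shadowing of pseudo-orbits linking them again produces inexpansive pairs. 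The cylinder case needs additional care due to its non-trivial fundamental group, and I would handle it by lifting to $\R^{2}$, tracking the translation component of the lift, and showing that TA on the cylinder would produce invariant essential curves incompatible with expansivity in dimension two.

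\textbf{Conjugacy on the plane.} Assume now $S=\R^{2}$. First, I would show $f$ has a fixed point: for each $x$, the constant sequence $x_{n}\equiv x$ is a $\delta$-pseudo-orbit whenever $\delta(f(x))>d(f(x),x)$, and shadowing produces an orbit staying near $x$ forever. Taking a limit and invoking Brouwer's plane translation theorem (applied to $f^{2}$ in the orientation-reversing case) yields a fixed point $p$. Next, I would show $p$ is the unique fixed point and globally attracting or repelling. Expansivity makes the fixed-point set discrete; shadowing of periodic pseudo-orbits combined with expansivity rules out any other periodic or recurrent orbits; planar topology (Brouwer's lemma on translation arcs and a fixed-point index computation) then forces uniqueness of $p$ and, after possibly replacing $f$ with $f^{-1}$, shows $p$ is a global attractor. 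Finally, I would apply Ker\'ekj\'art\'o's classical theorem: an orientation-preserving homeomorphism of $\R^{2}$ with a globally attracting fixed point is conjugate to the homothety $x\mapsto x/2$. The orientation-reversing case is obtained by applying the theorem to $f^{2}$ and noting that $f$, being a square root of a homothety in the resulting coordinates and having $p$ as the unique fixed point, must act as a reverse homothety.

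\textbf{Main obstacle.} The crux is the analysis on $\R^{2}$: ruling out non-trivial recurrence and proving global attraction of $p$. On non-compact spaces the shadowing function $\delta$ may decay rapidly towards infinity, so pseudo-orbits cannot be concatenated over long times as freely as in the compact setting. Excluding orbits that neither accumulate at $p$ nor escape to infinity therefore requires a delicate combination of shadowing with two-dimensional topological tools, such as Brouwer translation arcs, prime-end theory on the end compactification, or a direct analysis of the stable and unstable continua of $p$. A secondary difficulty is the cylinder case, whose non-trivial homotopy allows dynamics mimicking hyperbolic linear maps and must be excluded via lift arguments combined with a careful interplay between shadowing and expansivity on the universal cover.
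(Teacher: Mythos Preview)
Your proposal differs substantially from the paper's argument, and the gap you yourself flag as the ``main obstacle'' is exactly where the proof lives. On $\R^{2}$ you reduce everything to showing that the fixed point $p$ is globally attracting (or repelling), i.e.\ that $\Omega(f)=\{p\}$, and then hope to finish with Brouwer-type tools or prime ends. But none of the tools you list touches the real difficulty: a priori the nonwandering set could contain a nontrivial expansive transitive attractor---a one-dimensional continuum carrying derived-from-pseudo-Anosov dynamics---and nothing in Brouwer theory, index computations, or prime-end analysis rules this out. The paper handles precisely this scenario with heavy external input: the spectral decomposition of \cite{dlrw} yields compact basic pieces with no cycles and at least one attracting/repelling piece $\Lambda$; the Barge--Martensen classification \cite{BM} says such a $\Lambda$ is either a periodic orbit (and then Lemma~\ref{suf} finishes) or derived from pseudo-Anosov; and in the latter case Mouron's results \cite{tl,m2} force $\Lambda$ to separate the plane into at least three components, each of which is shown to be punctured and to contain a further repelling piece, producing infinitely many punctures and contradicting finite type. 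Your sketch contains no substitute for this chain, so as written it is a plan with a hole rather than a proof.

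A second, smaller gap: your direct elimination of $n\geq 2$ punctures is not convincing. The ``counting argument'' forcing two ends of the same type and the subsequent ``shadowing of pseudo-orbits linking them'' are asserted, not argued; for two attracting ends it is not clear what pseudo-orbit you would build or why its shadow contradicts expansivity, and the cylinder case you defer to lifts is left entirely open. The paper avoids this altogether: it never rules out extra punctures \emph{a priori}, but instead shows (via Lemma~\ref{suf}) that the mere existence of an attracting fixed point already forces the basin to be all of $S$, hence $S=\R^{2}$; extra punctures are then excluded \emph{a posteriori} by the infinite-puncture contradiction above. Finally, your fixed-point argument on $\R^{2}$ (``the constant sequence $x_{n}\equiv x$ is a $\delta$-pseudo-orbit whenever $\delta(f(x))>d(f(x),x)$'') presupposes the existence of such an $x$, which is not guaranteed since $\delta$ is dictated by shadowing and may be small everywhere.
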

 
By {\it reverse homothety} we mean the map $z\mapsto \overline z /2$, $z\in \C$.\\

Particular cases of this problem were treated on \cite{cgx}, and a weaker version of the theorem stated above is contained in Conjecture 1.1 of that paper. We also refer the reader to 
the former reference 
for a characterization of Topologically Anosov homeomorphisms on $\R$.

Expansive homeomorphisms with 
 the shadowing property on compact metric spaces are known to have spectral decomposition in Smale's sense (\cite{aoki}).

We use the spectral decomposition for Topologically Anosov homeomorphisms in \cite{dlrw}:

\begin{teo}\label{esp}  Let $X$ be a first countable, locally compact, paracompact, Hausdorff space and $f:X\to X$ a $\ta$ homeomorphism.  Then, $\Omega (f)$ can be written as a union of
disjoint closed invariant sets on which $f$ is topologically transitive.
 
\end{teo}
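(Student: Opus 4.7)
The plan is to carry out a topological analogue of Smale's spectral decomposition, replacing uniform constants by continuous positive functions and using the topological shadowing hypothesis as the substitute for hyperbolicity. I would introduce a chain-equivalence relation on $\Omega(f)$, show that its classes are closed and invariant, and use topological shadowing to upgrade pseudo-orbit connections to genuine dense orbits.

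Step 1 (chain relation). For $x,y\in\Omega(f)$, declare $x\dashv y$ if for every continuous $\delta:S\to\R_{>0}$ there is a finite $\delta$-pseudo-orbit from $x$ to $y$. Set $x\sim y$ iff $x\dashv y$ and $y\dashv x$. Symmetry is built in; transitivity follows by concatenation, provided one is careful that the two halves use the same $\delta$ (here local compactness lets us pass from a global $\delta$ to a uniform constant on the compact support of a finite pseudo-orbit, recovering the classical concatenation argument). Reflexivity $x\dashv x$ on $\Omega(f)$ is exactly the non-wandering hypothesis: given $\delta$, take a neighborhood $U\ni x$ on which $\delta$ is bounded below by $\delta(x)/2$, and use $f^n(U)\cap U\neq\emptyset$ to build a return pseudo-orbit.

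Step 2 (closed invariant classes). Invariance of the class $[x]_\sim$ under $f$ is immediate by shifting: if $x\sim y$ then $f(x)\sim f(y)$. For closedness, take $y_n\to y$ with $y_n\sim x$; given $\delta$, prepend (resp.\ append) short pseudo-orbits $y\to y_n$ and $y_n\to y$ that lie inside arbitrarily small neighborhoods of $y$ (obtained directly from continuity of $f$ and density of the convergent sequence) to the pseudo-orbits connecting $y_n$ and $x$, concluding $y\sim x$. Disjointness is automatic from $\sim$ being an equivalence relation.

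Step 3 (transitivity). Fix a class $\Omega_i$ and nonempty relatively open $U,V\subset\Omega_i$; pick $x\in U$ and $y\in V$. Given $\epsilon>0$ shrinking $U,V$, use topological shadowing to produce $\delta$, then use $x\dashv y$ to build a $\delta$-pseudo-orbit from $x$ to $y$, and extend it to a bi-infinite $\delta$-pseudo-orbit by attaching the return pseudo-orbits from Step 1 at the ends. The shadowing orbit then meets both $U$ and $V$, giving transitivity of $f|_{\Omega_i}$.

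The main obstacle is Step 2, together with all the book-keeping that the non-uniform setting forces on us. Replacing positive constants by continuous positive functions means that concatenations, restrictions to neighborhoods, and limit arguments no longer work verbatim; one has to exploit local compactness of $S$ to reduce each finite piece of a pseudo-orbit to a situation where a uniform estimate is available. Getting the equivalence classes to be genuinely closed in $\Omega(f)$, and making sure the shadowing orbit in Step 3 stays in $\Omega_i$ rather than escaping, is where the delicate estimates have to be made.
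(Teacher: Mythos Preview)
The paper does not prove Theorem~\ref{esp}; it is quoted verbatim from \cite{dlrw} and used as a black box throughout. So there is no ``paper's own proof'' to compare your proposal against---the authors simply cite the spectral decomposition of Das, Lee, Richeson and Wiseman and move on.

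That said, your outline is the right shape for how such a decomposition is proved, and indeed resembles what \cite{dlrw} actually does. The one place where your sketch is genuinely incomplete is Step~3, and you have correctly identified it yourself: the orbit produced by shadowing has no a priori reason to lie in $\Omega(f)$, let alone in the particular class $\Omega_i$, so it does not directly witness transitivity of $f|_{\Omega_i}$. The standard repair is to make the pseudo-orbit in Step~3 \emph{periodic} (use $x\dashv y$ and $y\dashv x$ to close it up), then invoke topological expansivity to force the shadowing orbit to be a genuine periodic orbit; periodic points lie in $\Omega(f)$, and a short argument shows they land in the correct class. This simultaneously gives density of periodic points in each $\Omega_i$, which is part of the full statement in \cite{dlrw} anyway. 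Without this closing-up step, Step~3 as written does not go through.
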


The disjoint closed invariant sets given by the previous theorem are called {\it basic sets}.

\section{Sufficient condition}

We show in this section that in the presence of an attracting (or repelling) fixed point, $S= \R ^ 2$ and $f$
 is conjugate to a homothety or reverse homothety.  This result is general and makes no assumption on the topology of $S$, which can be {\it any} non-compact surface.  By attracting
 fixed point we mean a fixed point $x_0$ with an open neighborhood $U$ such that $\overline{f(U)}\subset U$ and $\cap_{n\geq 0} f^n (U) = \{x_0\}$.

If $\epsilon: S\to \R$ is a strictly positive continuous function, we denote $W^s_\epsilon (x)=\{y\in S: d(f^n (x), f^n (y))<\epsilon (f^n (x)),  \forall n \geq 0\}$, and
$W^u_\epsilon (x)=\{y\in S: d(f^n (x), f^n (y))<\epsilon (f^n (x)),  \forall n \leq 0\}$.  By Proposition 20 in \cite{dlrw}, if $\epsilon: S\to \R$ is given by topological
expansivity, then there exists $\delta:S\to \R$ a continuous strictly positive function and a continuous map $t: B(x, \delta (x))^ 2\to S$ such that 
$$W^s_\epsilon (w)\cap W^u_\epsilon (y)=\{t(w,y)\}\, $$\noindent if $(w,y) \in B(x, \delta (x))^ 2$.  As a consequence, the set  $t(\gamma(s)\times \{y\}), s\in [0,1]$ is a connected
set contained in $W^u_\epsilon (y)$,
where 
$\gamma:[0,1]\to B(x,\delta(x))$ is a continuous map such
that $\gamma (0)=x$ and $\gamma (1) = y$.

\begin{lemma}\label{suf}  If there exists an attracting fixed point $x_0$, then $S=\R ^ 2$  and $f$
 is conjugate to a homothety or reverse homothety (depending on wether $f$ preserves or reverses orientation). 
 
\end{lemma}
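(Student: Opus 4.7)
The plan is to proceed in three stages: (i) identify the basin $B(x_0)=\bigcup_{n\ge 0} f^{-n}(U)$ as an open topological disk in $S$; (ii) upgrade local attraction at $x_0$ to global attraction, thereby proving $B(x_0)=S$ and $S\cong\R^2$; and (iii) build an explicit conjugacy between $f$ and the appropriate model map.

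For (i), one first observes that $U$ may be taken to be an open disk with compact closure by shrinking if necessary. Since $\overline{f(U)}\subset U$ gives $U\subset f^{-1}(U)$, the sets $f^{-n}(U)$ form an increasing family of open disks (each being homeomorphic to $U$ under $f^n$), whose union is $B(x_0)$. An increasing union of open disks in a surface sharing a common point $x_0$ is again an open disk, so $B(x_0)\cong\R^2$.

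Stage (ii)---the crux---is established by contradiction. Suppose $\partial B(x_0)\neq\emptyset$ and pick $y\in\partial B(x_0)$; since $x_0\in\mathrm{int}\,B(x_0)$, the full orbit of $y$ stays at positive distance from $x_0$. Let $z_k\in B(x_0)$ converge to $y$. The local product structure of Proposition~20 of \cite{dlrw}, recalled just before the statement, provides intersection points $t(z_k,y)\in W^s_\epsilon(z_k)\cap W^u_\epsilon(y)$, and since the stable set of $z_k\in B(x_0)$ lies in $B(x_0)$, these intersections also lie in $B(x_0)$. Joining $z_k$ to $y$ by an arc inside $B(y,\delta(y))$ and applying $t(\cdot,y)$ yields a connected arc in $W^u_\epsilon(y)$ emanating from $y$ and entering $B(x_0)$; consequently there exist points $p\in B(x_0)$ arbitrarily close to $y$ with $d(f^{-n}(p),f^{-n}(y))<\epsilon(f^{-n}(y))$ for every $n\ge 0$. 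One then concatenates the backward orbit of $p$ with the forward orbit of $y$ into a $\delta$-pseudo-orbit whose only ``gap'' is controlled by the closeness of $p$ to $y$, and shadowing produces an orbit through some $w$. Combining the resulting control on $w$ with expansivity and the spectral decomposition of Theorem~\ref{esp}, which constrains the possible basic sets, forces $w$ to lie in both $B(x_0)$ and $\partial B(x_0)$, an impossibility. Hence $\partial B(x_0)=\emptyset$, so $B(x_0)=S\cong\R^2$.

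For (iii), $x_0$ is necessarily the unique fixed point of $f$ (any other fixed point would have to converge to $x_0$ under iteration, impossible for a fixed point). Together with $B(x_0)=S$ and the contraction $\bigcap_n f^n(U)=\{x_0\}$, this implies that $\langle f\rangle$ acts freely and properly discontinuously on $S\setminus\{x_0\}\cong\R^2\setminus\{0\}$, with quotient a compact surface whose universal cover is an annulus: a torus when $f$ preserves orientation, a Klein bottle otherwise. The topological conjugacy class of such a covering action is uniquely determined by the quotient and by the deck action on $\pi_1$, so $f$ is conjugate to $z\mapsto z/2$ or $z\mapsto\bar z/2$ accordingly. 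The main obstacle of the argument is stage (ii), where topological expansivity, shadowing, local product structure and spectral decomposition must all be deployed in concert to promote local attracting behaviour to global attracting dynamics.
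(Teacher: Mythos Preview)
Your outline matches the paper's at the level of stages (i) and the reduction ``it suffices to show $B(x_0)=S$''. The divergence is in stage~(ii), and there your argument has a genuine gap.

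After locating $p\in W^u_\epsilon(y)\cap B(x_0)$ with $y\in\partial B(x_0)$, you build the pseudo-orbit $(f^n(p))_{n<0}\cup(f^n(y))_{n\ge 0}$, shadow it by some $w$, and then assert that expansivity together with the spectral decomposition ``forces $w$ to lie in both $B(x_0)$ and $\partial B(x_0)$''. Neither inclusion is justified. From shadowing you get $w\in W^s_\epsilon(y)$ and (loosely) $w\in W^u_\epsilon(p)$; but $W^s_\epsilon(y)$ need not be contained in $\partial B(x_0)$, and backward $\epsilon$-proximity to $p\in B(x_0)$ says nothing about $w$ belonging to the \emph{forward}-defined basin. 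The appeal to Theorem~\ref{esp} is unexplained: no basic set has been identified, and spectral decomposition plays no role in the paper's proof of this lemma.

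The paper's argument at this step is shorter and avoids shadowing altogether. It swaps your roles: take $x\in\partial B$ and $y\in B$ close to $x$. The local product structure gives a nondegenerate connected arc in $W^u_\epsilon(y)$ (because $y\notin W^s_\epsilon(x)$, as $f^n(y)\to x_0$ while $f^n(x)$ stays away). Pick $z\neq y$ on this arc close enough to $y$ that $z\in B$ and that the forward orbits of $z$ and $y$ stay $\epsilon$-close until both enter $B(x_0,\epsilon(x_0)/2)$; thereafter they stay close automatically since both tend to $x_0$. Thus $z\in W^s_\epsilon(y)\cap W^u_\epsilon(y)$ with $z\neq y$, contradicting topological expansivity directly. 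No pseudo-orbit, no shadowing, no spectral decomposition.

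For stage~(iii) the paper simply invokes the classical Ker\'ekj\'art\'o theorem once $x_0$ is globally asymptotically stable; your quotient-surface argument is a sketch of that theorem's proof and is fine, though unnecessary.
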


\begin{proof} Let $B$ be the basin of attraction of $x_0$, and note that it is enough to show that $B=S$. Indeed, if $B=S$, then $S$ is a nested increasing union of disks,
therefore a simply connected non compact 
surface, so it is homeomorphic to $\R^2$.  Moreover, if $B=S$, then $x_0$ is globally asymptotically stable, and therefore $f$
 is conjugate to a homothety or reverse homothety by the cassic Ker\'ekj\'art\'o's theorem (\cite{k1},\cite{k2}).
Take $x\in \partial B$. By the remarks preceeding this lemma,  there exists a 
neighbourhood $U$ of $x$ such that 
$W^s_{\epsilon} (w) \cap W^u_{\epsilon} (y)=t(w,y) $ for all $(w,y)\in U^2$, where  $\epsilon: S\to \R$ is given by topological
expansivity. Note that this is still true if we change the function $\epsilon$ for some other $\epsilon': S\to \R$ continuous and strictly positive such that
$\epsilon' (z)\leq \epsilon(z)$ for all $z\in S$.  Modifying the function $\epsilon$ if necessary, we have that any $y\in U\cap B$ does not belong to $W^s_\epsilon (x)$, because
$y\in B$ implies $f^n (y)\to_{n\to \infty} x_0$. Therefore we may take $\gamma:[0,1]\to U$ a continuous map such
that $\gamma (0)=x$ and $\gamma (1) = y$ and the set $t(\gamma(s)\times \{y\}), s\in [0,1]$ is not reduced to $\{y\}$.
We conclude that
$W^u_{\epsilon} (y)$ contains a connected set properly containing $y$.  Then, we can find  
$z\in W^u_{\epsilon} (y)\cap B$, $z\neq y$, such that $z\in W^s_{\epsilon} (y)$ (because if $z$ is sufficiently close to $y$, its iterates will remain close to those of $y$ until
they are both on $B(x_0, \epsilon (x_0)/2)$).  Then, $z\in W^s_\epsilon (y)\cap W^u_\epsilon (y) $ contradicting expansivity .

\end{proof}

 \section{Unbounded orbits}
  The following lemma is a  generalization of Lemma 2 in 
\cite{cgx}.  As in the previous section, we make no assumptions on the topology of $S$, which is considered to be {\it any}
non-compact surface.

\begin{lemma}\label{epsilon}
Let $f\in \homeo (S)$. If there exists $x\in S$ and a sequence of positive integers $(n_k)_{k\in \N}$ such that $f^{n_k}(x)\to_{k\to \infty} \infty$  then there exists $\epsilon: S \to \R$ a 
continuous positive map with the property that if $y\neq x$, then there 
exists 
$ k\in \N$ such that 
$d(f^{n_k} (x),f^{n_k} (y)) \geq \epsilon (f^{n_k} (x))$.  
 
\end{lemma}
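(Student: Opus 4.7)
The plan is to exploit two features of the hypothesis: that $p_k:=f^{n_k}(x)$ escape to infinity, and that each $f^{n_k}$ is a homeomorphism. Together they let me arrange $\epsilon(p_k)$ so small that any $y$ whose iterates $\epsilon$-shadow those of $x$ along the subsequence is forced to sit inside an arbitrarily small neighborhood of $x$, and hence must equal $x$. The mechanism is pulling back the small ball $B(p_k,\epsilon(p_k))$ by $f^{n_k}$: if it is small enough, its preimage is contained in any preassigned neighborhood of $x$.

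First I would pass to a subsequence so that the $p_k$ are pairwise distinct and therefore form a closed discrete subset of $S$; this is possible because only finitely many $p_k$ can lie in any compact set, in particular at any single point. Next I fix a decreasing neighborhood basis $(V_k)_{k\in\N}$ of $x$ with $\bigcap_k V_k=\{x\}$, for instance $V_k=B(x,1/k)$. Since $f^{n_k}$ is a homeomorphism sending $x$ to $p_k$, the set $f^{n_k}(V_k)$ is an open neighborhood of $p_k$, so I can pick $r_k>0$ with $B(p_k,r_k)\subset f^{n_k}(V_k)$.

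Then I would build a continuous, strictly positive function $\epsilon:S\to\R$ with $\epsilon(p_k)\le r_k$ for every $k$. Since $S$ is a metrizable paracompact manifold and $\{p_k\}$ is closed and discrete, this is a standard partition-of-unity construction: take pairwise disjoint open neighborhoods of the $p_k$ on which one caps the value of $\epsilon$ by $r_k$, and interpolate with a globally positive function on the complement. This is the only step that is notationally delicate, but it carries no real conceptual content; it is the main, and essentially the only, technical obstacle.

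Finally, for a given $y\ne x$, the equality $\bigcap_k V_k=\{x\}$ yields some $k_0$ with $y\notin V_{k_0}$. Applying the homeomorphism $f^{n_{k_0}}$ (in particular injectivity) gives $f^{n_{k_0}}(y)\notin f^{n_{k_0}}(V_{k_0})\supset B(p_{k_0},r_{k_0})\supset B(p_{k_0},\epsilon(p_{k_0}))$, which is exactly $d(p_{k_0},f^{n_{k_0}}(y))\ge\epsilon(p_{k_0})$. Note that $k_0$ is an index of the extracted subsequence, hence also an index of the original one, so the conclusion is obtained in the form required by the statement.
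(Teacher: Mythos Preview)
Your argument is correct and in fact simpler than the paper's. The paper proceeds by building, inductively along the sequence $(n_k)$, a local conjugacy $h$ from a chain of shrinking neighborhoods $V_k\ni f^{n_k}(x)$ to a translation $T$ on $\R^2$, and then pulls back an $\tilde\epsilon$ that works for $T$. Your route avoids this machinery entirely: you use only that each $f^{n_k}$ is a homeomorphism, so a small ball around $p_k$ pulls back into a prescribed basic neighborhood $V_k$ of $x$; the single function $\epsilon$ is then obtained by prescribing $\epsilon(p_k)\le r_k$ on the closed discrete set $\{p_k\}$ and extending. This is more elementary and shorter; the paper's conjugacy viewpoint is more structural (it literally linearizes the subsequence dynamics near the orbit), but that extra structure is not needed for the lemma as stated. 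Both approaches ultimately hinge on the same two facts you isolate at the start: $p_k\to\infty$ (giving discreteness, hence the freedom to prescribe $\epsilon$ at the $p_k$) and bijectivity of $f^{n_k}$ (so that ``staying $\epsilon$-close at time $n_k$'' forces $y$ into a controlled neighborhood of $x$).
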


\begin{proof}

First note that there exists a family of pairwise disjoint open sets $(U_{k})_{k\in \N}$ such that each $U_{k}$ is a neighbourhood of $f^{n_k} (x)$. We let $U_{0}$ be a
neighbourhood of $x$ and set $n_0 = 0$.
We claim that there exists a family of open sets $(V_{k})_{k\in \N}$ such that for all $k\in \N$, $V_{k} \subset U_{k}$, $f^{n_k}(x)\in V_{k}$, and a continuous map 
$h:\cup_k V_{k}\to \R^2$ which is a homeomorphism onto its image such that $hf^{n_k-n_{k-1}}|_{V_{k}}=T^{n_k-n_{k-1}}h$ for all $k\in \N$, where $T(x,y) = (x+1, y)$ for all $(x,y)\in \R ^2$.
Take a homeomorphism $h: U_0 \to B((0,0), 1/3)$, and let $V_0\subset U_0 $ an open set containing $x$ such that $f^{n_1}(V_0)\subset U_1$.  Define $\tilde U_1 := f^{n_1}(V_0)$ 
and extend the homeomorphism
$h$ to $\tilde U_1$ as $h|_{\tilde U_1} = T^{n_1}hf^{-n_1}$.  Note that $hf^{n_1}|_ {V_0} = T^{n_1}h|_{V_0}$. We now define $V_1\subset \tilde U_1$ such that $f^{n_2-n_1}(V_1)\subset U_2$,
let $\tilde U_2 = f^{n_2-n_1} (V_1)$
and extend $h$ to $\tilde U_2$ as $h|_{\tilde U_2} = T^{n_2-n_1}hf^{-n_2+n_1}$.  Inductively, if $h$ is defined on $\tilde U_i\subset U_i$, we extend $h$ to 
$\tilde U_{i+1}\subset U_{i+1}$ as
follows.
We take $V_i \subset \tilde U_i$ such that $f^{n_{i+1}-n_i}(V_i) \subset U_{i+1}$ and let $\tilde U_{i+1} = f^{n_{i+1}-n_i}(V_i)$.  We then let $h|_{\tilde U_{i+1}} = 
T^{n_{i+1-n_i}}h f^{-n_{i+1}+n_i}$.  Note that for all $i$, 
$hf^{n_{i+1}-n_i}|_ {V_i} = T^{n_{i+1}-n_i}h|_{V_i}$.  This proves the claim. 

Now take $\tilde \epsilon: \R^ 2 \to \R$ a continuous positive map verifying that for all $n\in \N$, $B((k,0), \tilde \epsilon ((k,0)))\subset h (V_k)$ and also that if $y\neq x$, 
then there exists 
$ n_0>0$ such that 
$||T^n (x)- T^n (y)|| > \tilde \epsilon (T^n (x))$ for all $n\geq n_0$.  Finally, we define $\epsilon: S \to \R$ such that $B(f^{n_k} (x),\epsilon (f^{n_k} (x)))\subset h^{-1}
(B((n_k,0)),\tilde \epsilon ((n_k,0)))$ 
and extend it
to a continuous positive map of $S$.  Now notice that if for some $y\neq x$, 
$d(f^{n_k} (x),f^{n_k} (y)) < \epsilon (f^{n_k} (x))$ for all $k\in \Z$, then $f^{n_k}(y) \in V_k$ for all $k\in \N$ which implies that  $T^{n_k}h(y)=hf^{n_k}(y)$ for all $k\in \N$.
So, 
$d(T^{n_k}(h(y)), (n_k,0))< \tilde \epsilon (n_k,0)$ for all $k\in \N$, a contradiction.

\end{proof}

Of course, we have an analogous statement if there exists a sequence of negative
integers $(n_k)_{k\in \N}$ such that $f^{n_k}(x)\to_k \infty$. In this case, there exists $\epsilon: S \to \R$ a 
continuous positive map with the property that if $y\neq x$, then there 
exists an integer 
$ n<0$ such that 
$d(f^n (x),f^n (y)) > \epsilon (f^n (x))$.  

\begin{rk}\label{rk1} Let $f^{n_k}(x)\to_k \infty$  for some sequence of positive integers $(n_k)_{k\in \N}$, and let $\epsilon: S \to \R$ be given by the previous lemma. Note that if
$(x_n)_{n\in \Z}$ is a pseudo-orbit such that 
$x_{n_k} = f^{n_k} (x)$ for all $k\geq k_0$, then the only possible orbit that
$\epsilon$-shadows $(x_n)_{n\in \Z}$ is that of $x$.
 \end{rk}

\begin{lemma}\label{aw}  Let $f\in \homeo (S)$ be $\ta$. For any point $x\in S$ either its forwards or backwards orbit is bounded.
 
\end{lemma}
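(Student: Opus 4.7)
The plan is to argue by contradiction. Assume there is $x \in S$ with both forward and backward orbits unbounded: sequences $n_k \to +\infty$ and $m_k \to -\infty$ with $f^{n_k}(x) \to \infty$ and $f^{m_k}(x) \to \infty$. First, by Lemma \ref{epsilon} (forward) and its backward analogue stated in the paragraph following it, I obtain continuous strictly positive functions $\epsilon_+, \epsilon_- : S \to \R$ detecting the orbit of $x$ along each tail. Let $\epsilon := \min(\epsilon_+, \epsilon_-, 1)$; by topological shadowing, pick a continuous strictly positive $\delta : S \to \R$ so that every $\delta$-pseudo-orbit is $\epsilon$-shadowed.

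The heart of the argument is to produce a $\delta$-pseudo-orbit $(y_n)_{n \in \Z}$ with $y_0 = w$, where $w \in S$ satisfies $d(x, w) > 2$ (such a $w$ exists since $S$ is non-compact), and $y_n = f^n(x)$ for all $|n|$ sufficiently large. Given such a pseudo-orbit, Remark \ref{rk1}, applied forward along $(n_k)$ and backward along $(m_k)$, forces the shadowing orbit to be that of $x$; the shadowing inequality at $n = 0$ then reads $d(x, w) < \epsilon(w) \leq 1$, contradicting $d(x, w) > 2$. To construct $(y_n)$ I would choose continuous paths $\alpha_1:[0,1]\to S$ from $x$ to $w$ and $\alpha_2:[0,1]\to S$ from $w$ to $x$, fix a window $[-N, N]$ with $n_k > N$ and $m_k < -N$ for some $k$, set $y_n = f^n(x)$ outside the window, and inside set $y_n = f^n(p_n)$ for a discretization $p_{-N} = x, \ldots, p_0 = w, \ldots, p_N = x$ of the concatenated path.

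The main obstacle I expect is the estimate $d(f^{n+1}(p_n), f^{n+1}(p_{n+1})) < \delta(f^{n+1}(p_n))$ for $|n| \leq N$: although this involves only the $2N+1$ iterates $f^{-N+1}, \ldots, f^{N+1}$, each uniformly continuous on the compact path $\alpha_1([0,1]) \cup \alpha_2([0,1])$, the required discretization fineness is coupled to $N$ (only $2N+1$ points are available), whereas the moduli of continuity of these iterates and the positive minimum of $\delta$ on their images also depend on $N$. Balancing these ingredients is where the hypothesis that \emph{both} orbits of $x$ are unbounded must be used essentially --- roughly, it forbids $f$ from being a uniform expansion or contraction in either time direction, which is exactly the obstruction one encounters in examples like the homothety (where, however, the hypothesis of the lemma itself fails).
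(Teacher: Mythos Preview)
Your construction has a genuine gap that your own diagnosis identifies but does not close. With exactly $2N+1$ discretization points on a fixed path, the pseudo-orbit condition at step $n$ requires $d(f^{n+1}(p_n),f^{n+1}(p_{n+1}))<\delta(f^{n+1}(p_n))$ for every $|n|\le N$. The required mesh size therefore depends on the moduli of continuity of $f^{-N+1},\ldots,f^{N}$ on the path, and there is no reason this should be achievable with only $2N+1$ points: iterates of a homeomorphism can stretch distances super-exponentially, and nothing prevents the needed number of points from growing faster than linearly in $N$. Your proposed rescue --- that unboundedness of both orbits ``forbids $f$ from being a uniform expansion or contraction'' --- is a heuristic about global orbit behaviour and says nothing about the local stretching of $f^j$ along the fixed compact arc $\alpha_1([0,1])\cup\alpha_2([0,1])$. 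As written, the argument does not establish that such a $\delta$-pseudo-orbit exists.

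The paper sidesteps this entirely with a far simpler pseudo-orbit. Rather than forcing $(y_n)$ to coincide with the orbit of $x$ on \emph{both} tails (which is what creates the detour-and-return problem), it takes $x_n=f^n(x)$ for $n<0$ and $x_n=f^n(y)$ for $n\ge 0$, where $y\in B(x,\delta(x))$, $y\neq x$. This has a \emph{single} jump and is automatically a $\delta$-pseudo-orbit --- no discretization, no balancing. Remark~\ref{rk1} applied to the backward tail (where $x_{m_k}=f^{m_k}(x)$) forces the shadowing orbit to be that of $x$. The contradiction then comes not at $n=0$ but from the forward direction of Lemma~\ref{epsilon}: since $y\neq x$, there is some positive $n$ with $d(f^n(x),f^n(y))>\epsilon(f^n(x))$, so the orbit of $x$ cannot $\epsilon$-shadow the forward tail $f^n(y)$. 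The key point you are missing is that one tail on the true orbit of $x$ suffices to pin down the shadowing orbit; the \emph{other} tail is then free to be the orbit of any nearby $y\neq x$, and Lemma~\ref{epsilon} (not the $n=0$ estimate) provides the contradiction.
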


\begin{proof}  Suppose there exists $x\in S$  such that both its forwards and backwards orbit are unbounded.  By
Lemma \ref{epsilon}, there exists $\epsilon: S \to \R$ a continuous positive map with the property that if $y\neq x$, then there exists 
$ n\in \Z,\  n>0$ such that 
$d(f^n (x),f^n (y)) > \epsilon (f^n (x))$ and $ m\in \Z, m<0$ such that 
$d(f^m (x),f^m (y)) > \epsilon (f^m (x))$.  Take $\delta: S \to \R$ a continuous positive map as in the 
definition of shadowing, and consider the following $\delta$-pseudo-orbit $(x_n)_{n\in \Z}$:  $x_n = f^{n} (x)$ for all $n<0$; $x_n= f^n (y)$ for all $n\geq 0$, where 
$y\in B (x, \delta (x)), y\neq x$.  Then, as explained in the previous remark, the orbit of $x$ is the only possibility to $\epsilon$-shadow this pseudo-orbit. However,
this is impossible  because as stated at the beginning of this proof, there exists 
$ n\in \Z,\  n>0$ such that 
$d(f^n (x),f^n (y)) > \epsilon (f^n (x))$.

\end{proof}

The previous lemma has immediate consequences on the structure of basic sets (see Theorem \ref{esp}):

\begin{lemma} The basic sets are compact.
 
\end{lemma}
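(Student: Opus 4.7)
The plan is to derive a contradiction from Lemma \ref{aw}. Suppose some basic set $\Lambda$ is not compact; since $\Lambda$ is closed in the locally compact surface $S$, this is the same as saying $\Lambda$ is unbounded, i.e. it contains a sequence escaping every compact subset of $S$.

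The heart of the argument will be to produce a point $x\in\Lambda$ whose forward \emph{and} backward orbits are both dense in $\Lambda$. Once I have such $x$, the fact that $\Lambda$ is unbounded forces both the forward and backward orbits of $x$ to be unbounded (a dense subset of an unbounded set is unbounded), and this directly contradicts Lemma \ref{aw}.

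To find such an $x$ I would first dispose of the degenerate case: if $\Lambda$ has an isolated point, then by invariance and topological transitivity $\Lambda$ must coincide with the (finite) orbit of that point, which is trivially compact and contradicts our assumption. So I may assume $\Lambda$ is perfect. As a closed subset of the locally compact, second countable, Hausdorff surface $S$, the set $\Lambda$ is then a Polish space with no isolated points, so Birkhoff's transitivity theorem applied to $f|_\Lambda$ yields a dense $G_\delta$ of points with dense forward orbit in $\Lambda$. Because topological transitivity is symmetric under $f\mapsto f^{-1}$ (swap the roles of the two open sets in the definition), the homeomorphism $f^{-1}|_\Lambda$ is also topologically transitive, and the same theorem produces a dense $G_\delta$ of points with dense backward orbit. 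The Baire category theorem guarantees that these two residual sets meet, providing the desired point.

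The only mildly delicate step is verifying that $\Lambda$ satisfies the hypotheses of Birkhoff's theorem and that dense forward and dense backward orbits can be arranged simultaneously; both are standard Baire-category arguments once perfectness of $\Lambda$ is secured. The identification of ``non-compact'' with ``unbounded'' for closed subsets of $S$, and the final appeal to Lemma \ref{aw}, are then immediate.
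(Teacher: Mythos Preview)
Your proof is correct and follows essentially the same route as the paper: assume $\Lambda$ is non-compact, find a point whose forward and backward orbits are both dense in $\Lambda$ (hence both unbounded), and invoke Lemma~\ref{aw}. The paper simply asserts ``by transitivity, there exists $x\in\Lambda$ such that both its forwards and backwards orbit are dense in $\Lambda$,'' whereas you supply the standard Baire-category justification (disposing of isolated points, then intersecting the residual sets of forward- and backward-transitive points); this is a welcome elaboration but not a different strategy.
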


\begin{proof}  Suppose there is a non-compact basic set $\Lambda$. By transitivity, there exists $x\in \Lambda$ such that both its forwards and backwards orbit are dense in $\Lambda$ 
and hence unbounded.  We are done by the previous lemma.
 
\end{proof}

We say that there is a {\it cycle} of basic sets if there exist $\Lambda _0, \ldots, \Lambda_{n-1}$ pairwise disjoint basic sets and points $x_0, \ldots, x_{n-1}$ such that 
$x_i\notin \Omega(f)$ for all $i=0, \ldots, n-1$ and such that $\alpha(x_i)\subset \Lambda_i$, $\omega(x_i)\subset \Lambda_{i+1}$ for all $i=0, \ldots, n-1$, where 
$\Lambda_n=\Lambda_0$ .

\begin{lemma}\label{cyc} There are no cycles of basic sets.
 
\end{lemma}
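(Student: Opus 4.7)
The plan is to argue by contradiction. Suppose there is a cycle $\Lambda_0,\ldots,\Lambda_{n-1}$ with heteroclinic points $x_0,\ldots,x_{n-1}\notin\Omega(f)$. The idea is to build a periodic $\delta$-pseudo-orbit cycling through all the basic sets via the $x_i$'s, shadow it to obtain an orbit $z$, use topological expansivity to force $z$ to be genuinely periodic (hence $z\in\Omega(f)$), and then derive a contradiction from the fact that the shadowing orbit passes close to each $x_i\notin\Omega(f)$.

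By transitivity of $f$ on each basic set, pick $p_i\in\Lambda_i$ with dense orbit. Choose a continuous positive $\epsilon:S\to\R$ such that the balls $B(x_i,\epsilon(x_i))$ are pairwise disjoint and disjoint from the closed set $\Omega(f)$, and moreover so small --- relative to a topological expansivity function $\epsilon_0$ --- that $d(w,y)<\epsilon(y)$ implies $2\epsilon(y)\leq\epsilon_0(w)$ (achievable from continuity and positivity of $\epsilon_0$). Let $\delta$ be the shadowing function corresponding to $\epsilon$. Using $\alpha(x_i)\subset\Lambda_i$, $\omega(x_i)\subset\Lambda_{i+1}$ and density of the orbits of the $p_i$'s, build a $\delta$-pseudo-orbit $(y_k)_{k\in\Z}$ of period $T$: one period begins at $p_0$, follows forward iterates of $p_0$ until some iterate is within $\delta$ of some $f^{-M_0}(x_0)$, jumps there, follows forward iterates of $x_0$ until one is within $\delta$ of some $f^{-N_1}(p_1)$, jumps to $p_1$, and so on around the cycle, finally closing at $p_0$. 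Each of the finitely many jumps can be made $<\delta$ by taking the intermediate stretches long enough.

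Topological shadowing now produces $z\in S$ with $d(f^k(z),y_k)<\epsilon(y_k)$ for all $k$. Since $y_{k+T}=y_k$, the orbit of $f^T(z)$ also $\epsilon$-shadows $(y_k)$, and the triangle inequality gives $d(f^k(z),f^{k+T}(z))<2\epsilon(y_k)\leq\epsilon_0(f^k(z))$ for every $k$. Topological expansivity applied to $z$ and $f^T(z)$ then forces $z=f^T(z)$, so $z$ is a periodic point and its whole orbit lies in $\Omega(f)$. On the other hand, by periodicity of $(y_k)$, for each $i$ there are infinitely many indices $k^*$ with $y_{k^*}=x_i$; for such $k^*$, $f^{k^*}(z)\in B(x_i,\epsilon(x_i))\subset S\setminus\Omega(f)$, contradicting $f^{k^*}(z)\in\Omega(f)$.

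The main technical obstacle is the interlocking choice of $\epsilon$, $\delta$, and the pseudo-orbit: the condition $2\epsilon(y)\leq\epsilon_0(w)$ whenever $d(w,y)<\epsilon(y)$ must be imposed on $\epsilon$ before $\delta$ and the $y_k$ are known, which is handled through a standard construction using only continuity and positivity of $\epsilon_0$. Closing the pseudo-orbit into a genuine period also requires some care, but is achieved by inductively adjusting the lengths of the intermediate stretches using density of the orbits of the $p_i$ in the basic sets.
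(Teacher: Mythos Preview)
Your argument is correct and follows the same overall strategy as the paper: build a periodic $\delta$-pseudo-orbit that cycles through the $\Lambda_i$ via the wandering points $x_i$, shadow it, and derive a contradiction from the fact that the shadowing orbit comes $\epsilon(x_i)$-close to $x_i\notin\Omega(f)$.

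The one substantive difference is in the final step. You invoke topological expansivity (and the auxiliary inequality $2\epsilon(y)\le\epsilon_0(w)$) to force the shadowing point $z$ to be genuinely periodic, so that its entire orbit lies in $\Omega(f)$, directly contradicting $f^{k^*}(z)\in B(x_i,\epsilon(x_i))\subset S\setminus\Omega(f)$. The paper avoids this: it simply observes that the shadowing orbit enters $B(x_0,r)$ infinitely often and is bounded (the periodic pseudo-orbit takes only finitely many values), hence it has an accumulation point in $\overline{B(x_0,r)}$; any such accumulation point lies in $\omega(z)\subset\Omega(f)$, contradicting the choice of $r$. This is shorter and does not require expansivity at all, nor the technical compatibility condition between $\epsilon$ and $\epsilon_0$. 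Your route, on the other hand, yields the slightly stronger conclusion that the shadowing point is truly periodic, and the expansivity--uniqueness trick you use is a standard and worthwhile one to have in hand.
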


\begin{proof}  Take $\Lambda_i$ and $x_i$ as in the definition of cycle, $i=0, \ldots, n-1$.  Take $r>0$ such that $B(x_i, r)$, $i=0, \ldots, n-1$.  are pairwise disjoint, and 
$B(x_i, r)\cap \Omega(f) = \emptyset$, $i=0, \ldots, n-1$.  Take
$\epsilon: S \to \R$ a continuous positive map such that $\epsilon(x_i) = r$, $i=0, \ldots, n-1$.  Take $\delta: S \to \R$ a continuous positive map as in the 
definition of shadowing. For all $i=0, \ldots, n-1$, let $x_i^-\in \Lambda_i\cap \alpha(x_i)$, 
$x_i^+\in \Lambda_{i+1}\cap \omega(x_i)$; $B_i^-= B(x_i^-, \delta(x_i^-)/2)$, $B_i^+= B(x_i^+, \delta(x_i^-)/2)$; $l_i<0$ such that $f^{l_i}(x_i)\in B_i^-$, $m_i>0$ such that $f^{m_i}(x_i)\in B_i^+$, and
$n_i>0$ such that $f^{n_i}(z_i)\in B_{i+1}^-$, $z_i \in B_i^+$.

 Now construct a periodic $\delta$-pseudo-orbit as follows: $f^j (x_0), j=l_0, m_{0}-1$, $f^j (z_0), j=0, \ldots, n_0-1$, $f^j (x_{1}), j=l_{1}, m_{1}-1$, $f^j (z_{1}), 
 j=0, \ldots, n_{1}-1$, $\ldots,$ $f^j (x_i), j=l_i, m_{i}-1$, $f^j (z_i), j=0, \ldots, n_i-1$, $f^j (x_{i+1}), j=l_{i+1}, m_{i+1}-1$, $f^j (z_{i+1}), 
 j=0, \ldots, n_{i+1}-1$, $\ldots,$ $f^j (x_{n-1}), j=l_, m_{n-1}-1$, $f^j (z_{n-1}), j=0, \ldots, n_{n-1}-1$, $f^{l_0} (x_{0}).$
 
 Take $x$ such that the orbit of $x$ $\epsilon$-shadows that pseudo orbit. Then, the orbit of $x$ must intersect $B(x_0,r)$ infinitely many times, and therefore have an accumulation point
 on $B(x_0,r)$, which is a contradiction because we had chosen $B(x_0, r)\cap \Omega(f) = \emptyset$.
\end{proof}

\begin{lemma} If there exists an unbounded past-orbit, then its $\omega$-limit is contained in an attracting basic piece.
 
\end{lemma}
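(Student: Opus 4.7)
The plan proceeds in two stages after a preliminary reduction. Let $x$ have unbounded past orbit. By Lemma~\ref{aw}, its forward orbit is bounded, so $\omega(x)$ is a non-empty compact subset of $\Omega(f)$, which by Theorem~\ref{esp} and the preceding lemma decomposes into a disjoint family $\{\Lambda_\alpha\}$ of compact transitive basic sets.

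First I would show that $\omega(x)$ lies in a single basic set $\Lambda$. Suppose not, and pick $y_0 \in \omega(x) \cap \Lambda_0$ and $y_1 \in \omega(x) \cap \Lambda_1$ in distinct basic sets. By transitivity, each $\Lambda_i$ contains a point $r_i$ with dense orbit. Since the orbit of $x$ enters any prescribed neighborhoods of $y_0$ and $y_1$ infinitely often, I would splice bi-infinite orbit pieces of $r_0$ and $r_1$ with finite segments of the orbit of $x$ that transit between those neighborhoods, producing $\delta$-pseudo-orbits. The topological shadowing property yields orbits whose $\alpha$-limit lies in one $\Lambda_i$ and whose $\omega$-limit lies in the other; since the $\Lambda_i$ are disjoint invariant sets, such a shadowing orbit cannot lie in $\Omega(f)$, so it consists of wandering points. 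Running this construction in both directions yields a 2-cycle of basic sets, contradicting Lemma~\ref{cyc}. Hence $\omega(x) \subset \Lambda$ for a single basic set $\Lambda$.

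Second I would show $\Lambda$ is attracting, i.e.\ it admits an open neighborhood $U$ with $f(\overline{U}) \subset U$ and $\bigcap_{n \geq 0} f^n(U) = \Lambda$. Using the local product structure of Proposition~20 of \cite{dlrw} near $\Lambda$ (as in the preamble to Lemma~\ref{suf}), this reduces to showing $W^u_\epsilon(p) \subset \Lambda$ for every $p \in \Lambda$. Otherwise there is $q \in W^u_\epsilon(p) \setminus \Lambda$, so $\alpha(q) \subset \Lambda$; if $q$ has bounded forward orbit, the first-stage argument applied to $q$ gives $\omega(q) \subset \Lambda'$ for a single basic set $\Lambda'$, with $\Lambda' \neq \Lambda$ excluded by a Lemma~\ref{suf}-style expansivity argument on homoclinic points lying simultaneously in $W^s_\epsilon(y) \cap W^u_\epsilon(y)$. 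This yields a heteroclinic orbit $\Lambda \to \Lambda'$. To close a cycle I would build a returning heteroclinic $\Lambda' \to \Lambda$ by shadowing a pseudo-orbit that blends a dense orbit in $\Lambda'$, a transit segment, and a tail of the orbit of $x$ (whose forward iterates enter $\Lambda$), thereby producing a 2-cycle and contradicting Lemma~\ref{cyc}. If instead $q$ had unbounded forward orbit, I would combine the past orbit of $q$ and the forward orbit of $x$ (both entering $\Lambda$) into a pseudo-orbit that cannot be shadowed, mimicking the argument of Lemma~\ref{aw}.

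The main obstacle is constructing the returning heteroclinic in the second stage: the orbit of $x$ natively connects infinity to $\Lambda$ rather than $\Lambda'$ to $\Lambda$, so the returning pseudo-orbit must be assembled carefully from a dense orbit in $\Lambda'$, a transit segment, and the tail of the orbit of $x$, while verifying that the resulting shadowing orbit is genuinely wandering (not in any basic set). The bookkeeping here — ensuring that all the splicing is $\delta$-small, that the shadowing orbit has $\alpha$-limit in $\Lambda'$ rather than in $\Lambda$ or at infinity, and that it is not itself nonwandering — is the delicate part of the argument.
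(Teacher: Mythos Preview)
Your Stage~2 has a genuine gap, and it is exactly the one you flag as ``the main obstacle'' without resolving. To produce a 2-cycle you need a heteroclinic $\Lambda'\to\Lambda$, but none of the orbits at your disposal furnishes a transit from $\Lambda'$ towards $\Lambda$: $q$ goes $\Lambda\to\Lambda'$ and $x$ goes $\infty\to\Lambda$. Splicing a dense orbit in $\Lambda'$ with the forward tail of $x$ requires a segment carrying you from a neighborhood of $\Lambda'$ to a neighborhood of the orbit of $x$, and nothing in the picture provides one; iterating to $\Lambda'',\Lambda''',\ldots$ only yields a forward chain, never a return. (In the subcase where $q$ has unbounded forward orbit you also have the combination reversed: the past of $q$ and the forward of $x$ both land in $\Lambda$ and are bounded, so nothing forces non-shadowability. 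What works is the \emph{past of $x$} spliced, through a dense orbit in $\Lambda$, to the \emph{forward of $q$}; then Lemma~\ref{epsilon} applied on each unbounded end pins the shadowing orbit to both $x$ and $q$, a contradiction.)

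The idea you are missing, and which is the whole content of the paper's argument, is that the unbounded past of $x$ serves as an \emph{anchor} via Lemma~\ref{epsilon} and Remark~\ref{rk1}: any pseudo-orbit whose negative tail coincides with $(f^n(x))_{n\le 0}$ can be $\epsilon$-shadowed only by the orbit of $x$ itself. Once you have this, no cycle needs to be closed at all. If $\omega(x)\subset\Lambda$ with $\Lambda$ a saddle, pick a wandering $y$ with $\alpha(y)\subset\Lambda$, choose $r>0$ with $B(y,r)\cap\mathcal O(x)=\emptyset$, and arrange $\epsilon(y)<r$. Build the $\delta$-pseudo-orbit: the full past of $x$, a forward stretch of $x$ into $\Lambda$, a piece of a transitive orbit in $\Lambda$ reaching near some $z\in\alpha(y)$, and then the forward orbit of $y$. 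The anchoring forces the shadowing orbit to be $\mathcal O(x)$, yet that orbit would have to enter $B(y,\epsilon(y))\subset B(y,r)$ --- contradiction. Your Stage~1 is more careful than the paper (which tacitly assumes $\omega(x)$ sits in a single piece), but with the anchoring device it becomes unnecessary.
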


\begin{proof} Let $x\in S$ and a sequence of negative
integers $(n_k)_{k\in \N}$ such that $f^{n_k}(x)\to_k \infty$. By Lemma \ref{epsilon}, there exists $\epsilon: S \to \R$ a 
continuous positive map with the property that if $y\neq x$, then there 
exists an integer 
$ n<0$ such that 
$d(f^n (x),f^n (y)) > \epsilon (f^n (x))$.  Let $\delta: S\to \R$ as in the definition of shadowing.By Lemma, \ref{aw}, $\omega(x)$ is compact.  It is obvious that $\omega(x)$ cannot 
be contained in a repelling basic piece, so
we have to discard $\omega(x)\subset \Lambda$, with $\Lambda$ a saddle type piece. If that would be the case, there would exist $y\in S, y \notin \Omega(f)$ such that $\alpha(y)\subset \Lambda$.  Let
$r>0$ be such that $B(y,r)\cap {\mathcal O(x)}=\emptyset$.  Modifying the function $\epsilon$ if necessary, we may assume that $\epsilon(y) <r$. Let $z\in \Lambda$ and $n$  large 
enough such that $d(f^{-n}(y), z)<\delta (z)$. Let $w\in \Lambda$ be such that its forwards orbit is dense in $\Lambda$ and $m$ large enough such that $d(f^m (x), w)< \delta(w)$.

Now, 
construct a $\delta$-pseudo orbit $(x_n)_{n\in \Z}$ as follows:  $x_n = f^n (x)$ for all $n\leq m-1$, $x_{m+j}=f^j (w)$ for all $j=0, \ldots, l-1$, where $d(f^l (w),z) <\delta(z)/2$,
$x_{m+l+j} = f^{-n+j} (y)$ for all $j\geq 0$.  By construction of the function $\epsilon$, this pseudo-orbit must be shadowed by the orbit of $x$, but this contradicts the fact that
${\mathcal O(x)}\cap B(y, \epsilon (y)) = \emptyset$.
 
\end{proof}

\begin{lemma} There exists either an attracting basic piece or a repelling basic piece.
 
\end{lemma}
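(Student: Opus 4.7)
The plan is to argue by contradiction: suppose there is no attracting basic piece and no repelling basic piece, and derive a cycle in the sense preceding Lemma \ref{cyc}.

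First, I would combine the previous lemma with its obvious time-reversed analog (any unbounded future orbit has its $\alpha$-limit contained in a repelling basic piece). Together with Lemma \ref{aw}, the negation of the conclusion forces every orbit of $f$ to be bounded in both time directions. Consequently, for every $x\in S$ the sets $\alpha(x)$ and $\omega(x)$ are non-empty compact invariant subsets of $\Omega(f)$; since basic pieces are pairwise disjoint closed transitive sets, each such limit set must be contained entirely in a single basic piece.

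Second, for every basic piece $\Lambda$ I would deduce from ``not attracting'' the existence of a wandering point $y\notin \Omega(f)$ with $\alpha(y)\subset \Lambda$, and symmetrically from ``not repelling'' a wandering $y$ with $\omega(y)\subset \Lambda$. The mechanism is the local product structure recalled immediately before Lemma \ref{suf}: if the local unstable set of $\Lambda$ coincided with $\Lambda$ itself, then a sufficiently small neighbourhood would shrink onto $\Lambda$ under forward iteration, making $\Lambda$ attracting; so non-attracting yields a point $y\notin \Lambda$ with $\alpha(y)\subset \Lambda$, and $y$ must be wandering (otherwise $y$ would sit in a basic piece disjoint from $\Lambda$, incompatible with $\alpha(y)\subset \Lambda$). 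By the first step, $\omega(y)$ lies in another basic piece $\Lambda'\neq \Lambda$.

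Iterating from any starting piece $\Lambda_0$ produces a chain $\Lambda_0,\Lambda_1,\Lambda_2,\dots$ of basic pieces and wandering connecting orbits $x_i$ with $\alpha(x_i)\subset \Lambda_i$, $\omega(x_i)\subset \Lambda_{i+1}$. If the chain ever revisits a piece, i.e.\ $\Lambda_{i+1}=\Lambda_j$ with $j\le i$, the basic pieces $\Lambda_j,\ldots,\Lambda_i$ together with the orbits $x_j,\ldots,x_i$ exhibit a cycle, contradicting Lemma \ref{cyc}. The main obstacle is therefore to rule out the case of an infinite chain of pairwise distinct basic pieces. I expect to close this gap via shadowing: construct a $\delta$-pseudo-orbit which tracks a dense orbit of $\Lambda_i$ for an arbitrarily long time and then jumps to $\Lambda_{i+1}$ along the orbit of $x_i$, for every $i$; any $\epsilon$-shadowing orbit provided by topological shadowing would then accumulate on every $\Lambda_i$, contradicting that its $\omega$-limit lies in a single basic piece. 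The dual ``no repeller'' hypothesis should supply the symmetric flexibility needed to seed and close this pseudo-orbit construction.
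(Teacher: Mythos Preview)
Your setup is essentially the paper's: assume every basic piece is a saddle, deduce all orbits are bounded, and build a chain $\Lambda_0,\Lambda_1,\Lambda_2,\ldots$ of basic pieces linked by wandering orbits. The paper also uses Lemma~\ref{cyc} to force the pieces to be pairwise distinct, just as you do. The divergence is in how you try to rule out an infinite chain of distinct pieces, and here there is a genuine gap.

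Your proposed pseudo-orbit visits $\Lambda_0$, then $\Lambda_1$, then $\Lambda_2$, and so on, each piece exactly once and in order; there are no backward connections $\Lambda_{i+1}\to\Lambda_i$ available to make it return. Hence the shadowing orbit $z$ merely \emph{passes near} each $\Lambda_i$ at some finite time $t_i$ and then moves on; it does not \emph{accumulate} on every $\Lambda_i$, and $\omega(z)$ need not meet any of them. So the contradiction ``$\omega(z)$ lies in a single basic piece but meets every $\Lambda_i$'' does not fire. At best your construction shows that the points $f^{t_i}(z)$, which lie $\epsilon$-close to $\Lambda_i$, accumulate in $\omega(z)\subset\Lambda^*$ for some basic piece $\Lambda^*$; this forces infinitely many distinct $\Lambda_i$ to enter any fixed neighbourhood of $\Lambda^*$, but that alone is not a contradiction without a further structural fact about basic pieces.

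The paper closes the argument without the shadowing detour. If $\overline{\bigcup_n\Lambda_n}$ is unbounded one gets an unbounded orbit via shadowing, already excluded. If it is compact, pick $x_n\in\Lambda_n$ and let $z$ be an accumulation point; then $z\in\Omega(f)$ lies in some basic piece $\Lambda$, which is therefore accumulated by the other $\Lambda_n$. But Proposition~30 of \cite{dlrw} says basic pieces are \emph{open} in $\Omega(f)$, so no basic piece can be accumulated by distinct ones. That openness fact is the missing ingredient; once you invoke it, your shadowing construction becomes unnecessary.
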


\begin{proof}  Suppose that every basic piece is a saddle.  Because of the previous lemma, every orbit is bounded.  Moreover, every wandering orbit has its $\alpha$- and 
$\omega$-limit contained in two different saddle pieces (see Lemma \ref{cyc}). Let $x_0$ be a wandering point, and let $\alpha(x_0)\subset \Lambda_0$ and $\omega(x_0)\subset \Lambda_1$.  
By definition of saddle piece, there exists $x_{-1}, x_1$ wandering points, $\Lambda_{-1}, \Lambda_2$ saddle pieces, such that $\alpha(x_{-1})\subset \Lambda_{-1}$, $\omega(x_{-1})\subset \Lambda_0$,
$\alpha(x_{1})\subset \Lambda_{1}$, $\omega(x_{1})\subset \Lambda_2$.  Moreover, the $\Lambda_i$'s are pairwise different by Lemma \ref{cyc}.  Proceeding inductively, one obtains
a sequence of pairwise different saddle pieces $(\Lambda_n)_{n\in \Z}$ such that for all $n\in \Z$ there exists $x_n$ a wandering point with  $\alpha(x_n)\subset \Lambda_n$ and 
$\omega(x_n)\subset \Lambda_{n+1}$.  Moreover, we may assume that the set $X=\overline {\cup_{n\in \Z}\Lambda _n}$ is compact, because otherwise we are done by the previous lemma and the shadowing property.
Therefore, if we take $x_n\in \Lambda_n$ for all $n$, there is an accumulation point $z$ that must belong to a basic set $\Lambda$.  So, this basic set $\Lambda$ is accumulated
by basics sets.  Now, by Proposition 30 in \cite{dlrw} the basics sets are open in $\Omega(f)$  a contradiction.

\end{proof}

\section{Genus zero and finite type}

From now on, we will assume that there exists an attracting basic piece $\Lambda$.  The results are analogous assuming the existence of a repelling basic piece.  
In this section we make the assumption that $S$ has genus zero and finite type.  In this case,
we can compactify $S$ adding a point to each correspoding puncture, obtaining a compact surface $S'$, and $f$ extends trivially to a homeomorphism of $S'$ (which is topologically 
a sphere).
As $\Lambda\subset S'$ is an expansive 
transitive attractor,  by Theorem 1 in \cite{BM} then either
$\Lambda$ is  a single periodic orbit or it is derived from pseudo-Anosov.  Suppose that $\Lambda$ is  a single periodic orbit $x$.  If $p$ is the period of $x$, then
$f^p$ is $\ta$ and has an attracting fixed point. Therefore, by Lemma \ref{suf}, $S=\R^ 2$ and $f^p$ is conjugate to a homothety or reverse homothety.  If $x$ is not fixed,
then $f^p$ fixes the whole orbit of $x$, which is impossible because $f^p$ has only one fixed point.  Therefore, $x$ is fixed and $f$ is conjugate to a homothety or reverse homothety.

In what follows,  we will assume that $\Lambda$ is derived from pseudo-Anosov. We first note that we may assume $\Lambda\subset \R^2$, by taking a power 
of $f$ fixing every puncture of $S'$, and then taking $\R^2 = S'\backslash \{p\}$, $p$ a puncture.
It follows from \cite{tl} that $\Lambda$ separates the plane, as 1-dimensional non-separating plane continua are tree-like and do not support expansive homeomorphisms.

\begin{lemma}\label{punc}  There are no bounded connected components of $S\backslash \Lambda$.
 
\end{lemma}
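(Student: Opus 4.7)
The plan is to argue by contradiction: suppose $U$ is a bounded component of $S\setminus\Lambda$. Since the finitely many punctures of $S$ in $\R^2$ cannot turn an unbounded component of $\R^2\setminus\Lambda$ into a bounded one, $U$ agrees, up to restoring those punctures, with a bounded component $\widetilde U$ of $\R^2\setminus\Lambda$. Because $\Lambda$ is a derived-from-pseudo-Anosov continuum in $S^2$, standard results on such attractors give that $\widetilde U$ is an open topological disk with $\partial\widetilde U\subset\Lambda$. Extending $f$ to a homeomorphism of $S^2=\R^2\cup\{\infty\}$ fixing $\infty$, $f$ permutes the components of $S^2\setminus\Lambda$ and preserves boundedness; derived-from-pseudo-Anosov attractors have only finitely many complementary disks in $S^2$, so some iterate $f^k$ satisfies $f^k(\widetilde U)=\widetilde U$. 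Replacing $f$ by $f^k$ (still a $\ta$ homeomorphism), I may assume $f(\widetilde U)=\widetilde U$.

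\textbf{Producing an attracting fixed point.} Since $\Lambda$ is attracting, a trapping neighborhood $V\supset\Lambda$ exists with $\overline{f(V)}\subset V$ and $\bigcap_n f^n(V)=\Lambda$; in particular a collar of $\partial\widetilde U$ inside $\widetilde U$ lies in the basin of $\Lambda$ and its orbits converge to $\partial\widetilde U$. Every $y\in\widetilde U$ has bounded forward orbit (it stays in $\overline{\widetilde U}$), so by the spectral decomposition (Theorem~\ref{esp}) $\omega(y)$ lies in a basic piece --- either $\Lambda$ itself or a basic piece $\Lambda'$ entirely contained in $\widetilde U$. Brouwer's fixed point theorem, applied to $f\colon\overline{\widetilde U}\to\overline{\widetilde U}$, yields a fixed point; I would then combine Lemma~\ref{cyc} (no cycles) with the attracting/saddle/repelling classification of basic pieces to identify an innermost attracting basic piece $\Lambda^\ast\subset\widetilde U$ and, after passing to a further iterate, extract from it an attracting fixed point lying in the interior of $\widetilde U$.

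\textbf{Conclusion and main obstacle.} Once such an attracting fixed point is produced, Lemma~\ref{suf} forces $f$ (and hence all its iterates) to be conjugate to a homothety on $S=\R^2$; but a homothety has no nontrivial invariant compact set other than its center, contradicting the existence of the pseudo-Anosov-derived attractor $\Lambda$. The main obstacle is precisely the penultimate step of the previous paragraph: since periodic points are dense in the derived-from-pseudo-Anosov attractor $\Lambda$, the Brouwer fixed point can a priori sit on $\partial\widetilde U\subset\Lambda$, and one must both rule this out and upgrade an interior periodic point to an attracting one. I expect this will require a Lefschetz-index computation on $\overline{\widetilde U}$, or a careful analysis of the local product and collar structure of $\Lambda$ near $\partial\widetilde U$ (using the transverse contraction provided by the attracting neighborhood $V$), in order to localize a fixed point of the correct type away from the boundary.
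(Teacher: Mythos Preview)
Your overall strategy---produce a periodic sink or source inside $U$ and then invoke Lemma~\ref{suf}---is the same as the paper's, but the mechanism you propose for producing that point has a genuine gap, and it is exactly the one you flag. The Brouwer step is a red herring: the fixed point it yields may well lie on $\partial\widetilde U\subset\Lambda$ (periodic points are dense there), and neither a Lefschetz computation nor a collar argument is how the paper pushes a fixed point into the interior. More seriously, even granting an ``innermost attracting basic piece $\Lambda^\ast\subset\widetilde U$'', you cannot simply ``extract from it an attracting fixed point'': if $\Lambda^\ast$ is itself derived from pseudo-Anosov, its periodic points are of saddle type within $\Lambda^\ast$, and none of them is an attracting fixed point of $f$. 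So the step ``attracting basic piece $\Rightarrow$ attracting periodic point'' is where your argument actually breaks.

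The paper closes this gap by a nested descent that avoids fixed-point index entirely. Since $\overline U$ is compact and basic pieces are isolated in $\Omega(f)$, there are only finitely many pieces in $U$. The attracting boundary together with the no-cycle Lemma~\ref{cyc} forces a \emph{repelling} piece $\Lambda_1\subset U$. Now invoke the dichotomy already used for $\Lambda$: an expansive attractor or repeller in the plane is either a periodic orbit or derived from pseudo-Anosov, and in the latter case it separates the plane (a non-separating one-dimensional plane continuum is tree-like, hence carries no expansive homeomorphism). If $\Lambda_1$ separates, a disk component $U_1\subset U\setminus\Lambda_1$ has repelling boundary and so contains an attracting piece $\Lambda_2$; repeat. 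Finiteness of the pieces halts this alternating descent at a non-separating piece, which must therefore be a periodic sink or source, and then Lemma~\ref{suf} gives the contradiction. The idea you are missing is precisely this alternating attracting/repelling nesting combined with the ``derived-from-pseudo-Anosov $\Rightarrow$ separating'' dichotomy; once you have it, Brouwer and Lefschetz are unnecessary.
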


\begin{proof}  Let $U$ be a bounded connected component of $S\backslash \Lambda$.  By Proposition 1.4 in \cite{BM}, there are only finitely many such components.  So, by taking
a power of $f$ if necessary, let us assume that $U$ is invariant.  Moreover, as $\Lambda$ is connected, $U$ is a topological disk with an attracting boundary.  As $\overline U$
is compact, there are only finitely many basic pieces inside $U$ (pieces are isolated by Propositions 27 and 30 in \cite{dlrw}).  As there are no cycles of saddles
(see Lemma \ref{cyc}), there must be a repelling basic piece  $\Lambda_1 \subset U$, which again must be separating.  Consider $U_1$ a connected component of $U\backslash \Lambda_1$
that is a topological disk with repelling boundary.  Then, there exists $\Lambda_2 \subset U_1$ an attracting piece.  Applying this inductively, as there
are finitely many pieces, one of them necessarilly does not separate the plane, and is therefore a periodic sink or source. 

Together with Lemma \ref{suf} this implies that every connected component of $S\backslash \Lambda$ is punctured. 
 
\end{proof}

The proof of the following lemma can be found in \cite{cgx} (proof of Lemma 10).

\begin{lemma}\label{autz}  Let $K$ be a compact invariant set with expansivity constant $C$.  Suppose that for all $x\in K$ there exists a neighborhood $U$ of $x$, and $z\in U$ such
that the orbit of $z$
$C/2$-shadows
any pseudo-orbit $(x_n)_{n\in \Z}$ such that $x_n = f^n (y), n<0$ for some $ y \in U$ and $x_n = f ^n(z), n\geq 0$.  Then, $K$ is finite.
 
\end{lemma}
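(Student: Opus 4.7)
The plan is to argue by contradiction: assume $K$ is infinite, and derive a contradiction from the combination of expansivity on $K$ with the specific form of shadowing assumed in the hypothesis.

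The first step is to reformulate the hypothesis in purely local-unstable terms. Given $y \in U$, the pseudo-orbit $(x_n)$ with $x_n = f^n(y)$ for $n<0$ and $x_n = f^n(z)$ for $n \ge 0$ is $C/2$-shadowed by the orbit of $z$; writing this out at each $n < 0$ yields $d(f^n(y), f^n(z)) < C/2$. So, after shrinking $U$ if necessary, the hypothesis is equivalent to: every $x \in K$ has a neighborhood $U$ and a point $z \in U$ with $U \subset W^u_{C/2}(z)$, i.e., the entire past orbit of every point near $x$ stays within $C/2$ of the past orbit of $z$.

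Next I would exploit compactness of $K$ to extract a finite subcover $U_1, \dots, U_N$ with corresponding points $z_1, \dots, z_N$. If $K$ were infinite, by pigeonhole some $U_i$ would contain infinitely many points of $K$, accumulating at a point $y^* \in K$. Picking $y_k \to y^*$ in $K \cap U_i \setminus \{y^*\}$, both $y^*$ and each $y_k$ lie in $W^u_{C/2}(z_i)$, so their backward orbits remain within $C$ of each other. Expansivity of $K$ with constant $C$ then forces, for each $k$, a minimal $n_k \ge 0$ with $d(f^{n_k}(y_k), f^{n_k}(y^*)) > C$; continuity of each fixed iterate together with $y_k \to y^*$ implies $n_k \to \infty$. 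By compactness of $K$ and minimality of $n_k$, after extraction we may take $f^{n_k}(y_k) \to w$ and $f^{n_k}(y^*) \to w'$ in $K$ with $d(w, w') \ge C$ but $d(f^{-1}(w), f^{-1}(w')) \le C$.

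The last step is to turn the pair $(w, w')$ into a contradiction. I would apply the hypothesis at $w$ to obtain a neighborhood $V$ and a shadowing point $z' \in V$ with $V \subset W^u_{C/2}(z')$, and then splice a pseudo-orbit which matches the backward orbit of $y^*$ (hence close to that of $z_i$) at very negative times and the forward orbit of $z'$ at positive times, using the global shadowing of the $\ta$ homeomorphism $f$ to bridge the two regimes. The construction can be arranged so that any orbit that $C/2$-shadows it is, in the backward direction, constrained by $z_i$'s past and, in the forward direction, by $z'$'s future; performing the analogous construction anchored at $w'$ produces a second shadowing orbit which by expansivity must coincide with the first, contradicting $d(w, w') \ge C$. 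The main obstacle is precisely this last splicing argument: orchestrating the pseudo-orbit so that the shadowing constants line up and the uniqueness forced by expansivity is genuinely brought to bear. This is the same combinatorial flavor as the proof of Lemma \ref{cyc}, and the complete details are the ones carried out in \cite{cgx}.
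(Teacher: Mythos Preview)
The paper gives no proof of its own here; it simply refers the reader to \cite{cgx}, Lemma~10. So there is no internal argument to compare your sketch against, and in that narrow sense your proposal already does more than the paper.

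Your first two paragraphs are correct and isolate the real content: the hypothesis says precisely that every $x\in K$ has a neighborhood contained in some $W^u_{C/2}(z)$. Via a finite subcover and a Lebesgue-number argument, any two distinct nearby points of $K$ then have backward orbits remaining within $C$ of each other, so by expansivity on $K$ they must separate by more than $C$ at some \emph{forward} time. In other words, $f|_K$ is positively expansive. The lemma is then an instance of the classical fact that a positively expansive homeomorphism of a compact metric space has finite domain; this is the cleanest way to close, and the reference \cite{aoki} already in the paper covers it.

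Your third paragraph correctly begins a direct proof of that classical fact. The genuine gap is the fourth paragraph. Two problems: first, you invoke ``the global shadowing of the $\ta$ homeomorphism $f$'', but the lemma as stated makes no $\ta$ assumption --- its hypotheses are only compactness of $K$, expansivity on $K$, and the local shadowing condition --- so appealing to ambient $\ta$ structure is illegitimate (and unnecessary). Second, the splice you describe joins the backward orbit of $y^*$ (near time $0$) to the forward orbit of a point $z'$ near $w=\lim f^{n_k}(y_k)$ (near time $n_k\to\infty$); it is not clear what pseudo-orbit this produces, which $\delta$ it satisfies, or why the resulting shadowing forces a contradiction with $d(w,w')\ge C$. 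The argument as written stops at paragraph three. To finish without citing the positive-expansivity theorem, the standard route is to show that $\{f^{-n}|_K:n\ge 0\}$ is equicontinuous (your $y_k\to y^*$, $n_k\to\infty$ construction is exactly what one uses to prove this by contradiction) and then combine equicontinuity of the backward family with expansivity to force finiteness.
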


\begin{lemma} The $\alpha$-limit of any point in the basin of attraction of $\Lambda$ is bounded.
 
\end{lemma}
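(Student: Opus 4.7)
The plan is to argue by contradiction, using the backward analogs of Lemma~\ref{epsilon} and Remark~\ref{rk1}, together with the fact that $\omega(x)\subset \Lambda$, to force the orbit of $x$ to shadow orbits living in $\Lambda$, and then to derive a contradiction from the local structure of the derived-from-pseudo-Anosov attractor $\Lambda$.

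First I would assume for contradiction that $\alpha(x)$ is unbounded; a diagonal argument then produces a sequence of negative integers $n_k\to -\infty$ with $f^{n_k}(x)\to\infty$. Applying the backward analog of Lemma~\ref{epsilon} noted right after its proof, I obtain a continuous positive function $\epsilon:S\to\R$, which I shrink so that $2\epsilon$ stays below a topological-expansivity function of $f$. The backward version of Remark~\ref{rk1} then says that any pseudo-orbit coinciding with $f^{n_k}(x)$ at every sufficiently large $k$ can be $\epsilon$-shadowed only by the orbit of $x$. Let $\delta:S\to\R$ be a continuous positive function from the topological shadowing property for $\epsilon$.

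Since $\omega(x)\subset \Lambda$, I would choose $M>0$ large enough that $n_k<M$ for every $k$ and $d(f^M(x),\Lambda)<\delta(f^M(x))/2$. For every $\eta\in\Lambda$ with $d(\eta,f^M(x))<\delta(f^M(x))/2$, I consider the spliced sequence
\[
x_n = f^n(x) \text{ for } n<M, \qquad x_n = f^{n-M}(\eta) \text{ for } n\geq M,
\]
which is a $\delta$-pseudo-orbit agreeing with $x$ at every $n_k$. Its only $\epsilon$-shadow is therefore the orbit of $x$, which forces
\[
d(f^{M+j}(x),\, f^j(\eta))<\epsilon(f^{M+j}(x)) \qquad\text{for every } j\geq 0,
\]
i.e.\ every such $\eta$ lies in the local $\epsilon$-stable set of the forward orbit of $f^M(x)$. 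For any two distinct $\eta_1,\eta_2$ in this intersection, the triangle inequality combined with the choice of $\epsilon$ forces their topological-expansivity separation to happen only at negative times, so $\Lambda\cap B(f^M(x),\delta/2)$ collapses into a single local stable leaf of $f^M(x)$.

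The hardest part of the proof will be this final step: extracting a contradiction from the above collapse. The intended mechanism is to apply the local product structure provided by Proposition~20 of \cite{dlrw} together with the fact that, being derived from pseudo-Anosov, $\Lambda$ has transverse unstable directions at every point, so its intersection with any small ball must contain orbits that separate under positive iteration. Making this precise — in particular, controlling the unstable direction near a point $f^M(x)$ of the basin that is not itself in $\Lambda$ — is where the bulk of the technical work will lie.
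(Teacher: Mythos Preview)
Your setup---the backward version of Lemma~\ref{epsilon} and Remark~\ref{rk1}, the choice of $\epsilon$ below half an expansivity function, and the splicing of the backward orbit of the point with unbounded $\alpha$-limit against forward orbits of nearby points---is exactly the paper's. The divergence is in how the contradiction is extracted.

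You work at a single forward iterate $f^M(x)$, show that every $\eta\in\Lambda\cap B(f^M(x),\delta/2)$ lies in one local stable set, and then propose to contradict this with the local DPA structure of $\Lambda$. This last step is actually less delicate than you fear: since $B(f^M(x),\delta/2)$ contains some $p\in\Lambda$ and hence a relatively open piece of $\Lambda$ around $p$, it contains a nontrivial sub-arc of the local unstable manifold through $p$ (which lies in $\Lambda$, as $\Lambda$ is an attractor); any two distinct points on that arc separate under forward iteration beyond the expansivity constant, contradicting your stable-set conclusion. So your scheme goes through without heavy machinery.

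The paper takes a different route at this stage. Rather than working at one point and invoking the DPA geometry, it uses transitivity of $\Lambda$ to place a forward iterate $z=f^n(y)$ inside $B(x',\epsilon(x')/2)$ for \emph{every} $x'\in\Lambda$, so that the splicing argument verifies the hypothesis of Lemma~\ref{autz} uniformly over $\Lambda$. That lemma (imported from \cite{cgx}) then yields that $\Lambda$ is finite, which is immediately incompatible with $\Lambda$ being derived from pseudo-Anosov. In short, the paper trades your one-point DPA analysis for a global statement plus a black-box finiteness lemma; your approach trades the need for density of the forward orbit of $y$ in $\Lambda$ for a direct appeal to the unstable lamination.
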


\begin{proof}  Suppose there exists $y$ in the basin of attraction of $\Lambda$ such that $\alpha(y)$ is unbounded.  By Lemma \ref{epsilon} there exists $\epsilon: S \to \R$ a 
continuous positive map with the property that if $x\neq y$, then there 
exists 
$ n\in \N$ such that 
$d(f^{-n} (x),f^{-n} (y)) > \epsilon (f^{-n} (y))$.  Take $\delta: S \to \R$ as in the definition of shadowing, and for all $x\in \Lambda$ let $U=U_x= B (x,\epsilon(x)/2)$.  Note that
by transitivity of $\Lambda$ and the choice of $\epsilon$, there exists $n\geq 0$ such that $f^n(y)\in U$. Take $z=f^n (y)$ and note that it verifies the hypothesis of Lemma \ref{autz}.
 
\end{proof}

\begin{lemma}\label{ind} There exists a repelling basic piece in any connected component of $S\backslash \Lambda$ .
 
\end{lemma}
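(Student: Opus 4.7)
My plan is to argue by contradiction, assuming that $U$ is a connected component of $S\setminus \Lambda$ containing no repelling basic piece. Since $U$ contains at least one puncture by Lemma \ref{punc}, after replacing $f$ by a suitable iterate I may assume that $U$ is $f$-invariant and that every point of $\overline U\subset S'$ corresponding to a puncture of $S$ is fixed by $f$; this is possible because $S'\setminus S$ is finite and basic pieces are preserved under iteration.

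The key intermediate claim I would establish is: every $x\in U$ in the basin of $\Lambda$ has $\alpha(x)$ contained in a saddle piece lying in $U$. Indeed, the previous lemma guarantees $\alpha(x)$ is bounded; by Theorem \ref{esp} it lies in a single basic piece; this piece is inside $\overline U$ by invariance of $U$, it cannot be $\Lambda$ (since $\Lambda$ attracts and backward orbits of points outside $\Lambda$ leave its neighbourhood), cannot be a repeller (by hypothesis), and cannot be another attractor (attractors have trivial unstable sets), so it must be a saddle inside $U$. By Lemma \ref{cyc} these saddle pieces form an acyclic partial order under ``$\Lambda_1\succ \Lambda_2$ iff some wandering orbit connects $\Lambda_1$ to $\Lambda_2$''. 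Selecting a maximal saddle $K$, I would then analyse points of $W^s(K)\setminus K$: the same reasoning applied to their $\alpha$-limits forces such points to have unbounded backward orbits, for otherwise $\alpha$ would lie in a basic piece strictly above $K$, contradicting maximality. I would thus produce $y\in U$ whose backward orbit leaves every compact subset of $S$ and tends to a puncture $p\in\overline U\setminus S$.

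The contradiction now follows by a shadowing argument in the spirit of Lemma \ref{aw}. The backward analogue of Lemma \ref{epsilon} (stated in the paragraph after that lemma) yields a continuous positive $\epsilon\colon S\to\R$ such that the backward orbit of $y$ is uniquely $\epsilon$-shadowable, as in Remark \ref{rk1}. Let $\delta$ be the shadowing function paired to $\epsilon$. Using transitivity of $\omega(y)\subset K$, I would construct a $\delta$-pseudo-orbit $(x_n)_{n\in\Z}$ which coincides with $f^n(y)$ for $n\leq -n_0$, jumps via a point of $K$, and then follows the forward orbit of some $z\neq y$ whose future iterates diverge from those of $y$. By uniqueness the only possible $\epsilon$-shadow would be the orbit of $y$, yet by construction the forward tail of the pseudo-orbit is eventually separated from $f^n(y)$ by more than $\epsilon(f^n(y))$, contradicting the shadowing property.

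The hard part will be the pseudo-orbit construction: one must simultaneously guarantee $\delta$-closeness at the jump, arrange for forward-orbit divergence of the two halves, and confirm that the uniqueness criterion of Remark \ref{rk1} really applies in $S$ (i.e.\ that $f^{-n}(y)\to p$ in $S'$ translates to $f^{-n}(y)\to\infty$ in $S$). A secondary subtlety is the possibility that $W^s(K)\setminus K$ meets only the basin of $\Lambda$, in which case finding $y$ with unbounded backward orbit outside this basin requires a more delicate argument combining maximality of $K$ with the topology of the complementary regions inside $U$.
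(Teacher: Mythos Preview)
Your argument has a genuine gap at the step ``selecting a maximal saddle $K$.'' You never justify that such a maximal element exists. The partial order you define on saddle pieces is acyclic by Lemma~\ref{cyc}, but acyclicity alone does not guarantee maximal elements: you would need either finitely many saddle pieces in $U$, or an ascending chain condition. Since $U$ is unbounded (it contains a puncture), neither is obvious, and in fact ruling out an infinite ascending chain of saddles is precisely the heart of the matter.

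The paper's proof attacks this point directly and much more briefly. Starting from any $x$ in the basin of $\Lambda$, the previous lemma gives $\alpha(x)$ bounded, hence contained in some basic piece; if this is never a repeller one builds an infinite ascending chain of distinct saddle pieces $\Lambda_1, \Lambda_2,\ldots$ in $U$. If this chain is spatially unbounded, a shadowing argument (concatenating connecting orbits along the chain) produces an orbit in the basin of $\Lambda$ with unbounded past, contradicting the previous lemma; if the chain stays in a compact set, the pieces accumulate, contradicting the isolation of basic pieces (Proposition~30 in \cite{dlrw}). Either way one obtains a repeller. Your maximal-saddle strategy could be completed by exactly this dichotomy, but as written you have skipped it.

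Two smaller remarks. First, once you have produced $y$ with unbounded backward orbit and $\omega(y)\subset K$ a saddle, you do not need your hand-built pseudo-orbit: this already contradicts the lemma immediately preceding Lemma~\ref{ind} (unbounded past-orbit forces $\omega$ into an \emph{attracting} piece), so your step~5 is a re-derivation of that lemma. Second, your ``secondary subtlety'' is miscast: points of $W^s(K)\setminus K$ have $\omega$-limit in $K$, so by definition they are \emph{not} in the basin of $\Lambda$; the difficulty you describe does not arise in that form.
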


\begin{proof} The $\alpha$-limit of any point in the basin of attraction of $\Lambda$ is bounded, and therefore is contained in a basic piece.  If there is a chain of saddle pieces
going to infinity, by shadowing we obtain an orbit going to infinity, a contradiction.  The result follows.
 
\end{proof}

\noindent{\bf Proof of Theorem \ref{t1}}
We know by \cite{m2} that any attracting or repelling basic piece separates $S$ in at least three connected components.  Moreover, Lemma \ref{punc} implies that any such component is
punctured.  Let $U$ be a connected component of $S\backslash \Lambda$, let $x_0$ be a puncture in $U$, and let $\Lambda _1\subset U$ be the repelling basic piece given by Lemma 
\ref{ind}. Note that $\Lambda_1$ separates $U$ in at least three punctured connected components, so we can get a puncture $x_1\neq x_0$ in a connected component of 
$U\backslash \Lambda_1$. Proceeding inductively, we obtain an infinite sequence of different punctures, a contradiction.

\end{document}